\documentclass[11pt]{article}
\usepackage[english]{babel}
\usepackage{amsfonts, amsmath, amssymb,amscd}
\usepackage[all]{xy}
\usepackage{latexsym}
\usepackage{amsmath, amsthm}
\usepackage{amsfonts}
\usepackage{amssymb}
\usepackage{amsmath}
\usepackage{graphicx,color}

\usepackage{tikz}
\usepackage{verbatim}

\setcounter{MaxMatrixCols}{30}
\newtheorem{theorem}{Theorem}[section]
\newtheorem{lemma}[theorem]{Lemma}

\newtheorem{corollary}[theorem]{Corollary}

\newcommand{\Aop}{\mathcal{A}}

\begin{document}

\title{On the nature of isolated asymptotic singularities  of solutions of a family of quasi-linear elliptic PDE's on a  Cartan-Hadamard manifold }
\author{Leonardo Bonorino
\and Jaime Ripoll}

\date{}
\maketitle

\begin{abstract}
Let $M$ be a  Cartan-Hadamard manifold with sectional curvature satisfying $-b^2\leq K\leq -a^2<0$, $b\geq a>0.$ Denote by $\partial_{\infty}M$ the asymptotic boundary of $M$ and by $\bar M:= M\cup\partial_\infty M$ the  geometric compactification of $M$ with the cone topology. We investigate here the following question: Given a finite number of points $p_{1},...,p_{k}\in\partial_\infty M,$
 if $u\in C^{\infty}(M)\cap C^{0}\left(
\bar{M}\backslash\left\{  p_{1},...,p_{k}\right\}  \right)  $ satisfies a PDE $\mathcal Q(u)=0$ in $M$ and if $u|_{\partial_\infty M\setminus\left\{p_{1},...,p_{k}\right\}}$
extends continuously to $p_{i},$ $i=1,...,k,$ can one conclude that $u\in C^{0}\left(
\bar{M}\right )?$ When $\dim M=2$, for $\mathcal Q$ belonging to a  linearly convex space of quasi-linear elliptic operators $\mathcal{S}$ of the form
$$
\mathcal{Q}(u)=\operatorname{div}\left( \frac{\Aop(|\nabla u|)}{|\nabla u|} \nabla u \right)=0,  
$$
where $\Aop$ satisfies some structural conditions, then the answer is yes provided that $\Aop$ has a certain asymptotic growth.  This condition includes, besides the minimal graph PDE,  a  class of minimal type PDEs.
 
In the hyperbolic space $ \mathbb{H}^n$, $n\ge 2,$ we are able to give a complete answer: we prove that $\mathcal{S}$  splits into two disjoint classes of minimal type and $p-$Laplacian type PDEs, $p>1,$ where the answer is yes and no respectively. These two classes are determined by the  asymptotic behaviour of $\mathcal A.$  Regarding the class where the answer is negative, we obtain  explicit  solutions having an isolated non removable singularity at infinity.
\end{abstract}

\section{Introduction}

\qquad Let $M$ be Cartan-Hadamard $n-$dimensional manifold (complete, connected, simply
connected Riemannian manifold with non-positive sectional curvature). It is well-known
that  $M$ can be compactified with the so called
cone topology by adding a sphere at infinity, also called the asymptotic
boundary of $M$; we refer to \cite{EO} for details. In the sequel, we will
denote by $\partial _{\infty }M$ the sphere at infinity and by $\bar{M}%
=M\cup \partial _{\infty }M$ the compactification of $M$.

We recall that the asymptotic Dirichlet problem of a PDE $\mathcal Q(u)=0$ in $M$ for a
given asymptotic boundary data $\psi \in C^{0}\left( \partial _{\infty
}M\right) $ consists in finding a solution $u\in C^{0}\left( \bar{M}%
\right) $ of $Q(u)=0$ in $M$ such that $u|_{\partial _{\infty }M}=\psi $,
determining the uniqueness of $u$ as well$.$ 

The asymptotic Dirichlet
problem for the Laplacian PDE has been studied during the last 30 years and
there is a vast literature in this case. More recently, it has been studied
in a larger class of PDEs  which include the $p-$Laplacian PDE, $p>1,$ 

$$
\Delta_p u=\operatorname{div}\frac{\nabla u}{\left\vert \nabla
u\right\vert ^{p}}=0,  
$$
see \cite{H}, and the minimal graph PDE, 

\begin{equation}
\mathcal{M}(u)=\operatorname{div}\frac{\nabla u}{\sqrt{1+\left\vert \nabla
u\right\vert ^{2}}}=0,  \label{min}
\end{equation}%
see \cite {GR}, \cite {RT1}, case that we are specially interested in the present work. We note that $\operatorname{div}$
and $\nabla $ are the divergence and the gradient in $M$ and it is worth to mention that the graph 
$$G(r)=\left\{  (  x,u(x))  \text{
$\vert \quad$} x\in M\right\}  $$
of $u$ is a minimal surface in $M\times \mathbb{R}$ if and only if $u$
satisfies (\ref{min}). 

Presently it is known that the asymptotic Dirichelt problem can be solved in any  Cartan-Hadamard manifold under hypothesis on the growth of  the sectional curvature that includes the ones with negatively pinched curvature, for any given continuous data  at infinity, and  on a large class of PDEs that includes both $p-$Laplacian and minimal graph PDEs (see \cite{CHR}, \cite
{RT2}).

A natural question related
to the asymptotic Dirichlet problem concerns the existence
or not of solutions with isolated singularities at $\partial _{\infty }M.$ We investigate  this problem on the following class  $\mathcal{S}$ of quasi-linear elliptic operators:

\begin{equation}
\mathcal{Q}(u)=\operatorname{div}\left( \frac{\Aop(|\nabla u|)}{|\nabla u|} \nabla u \right)=0,  \label{generalOperator}
\end{equation}
where   $\Aop \in C^1[0,\infty)$ satisfies the following conditions:
\begin{equation}
\left. \begin{array}{l} 
\Aop(0)=0, \Aop'(s) > 0 \; {\rm for} \; s >0; \\[2pt]
\Aop(s) \le C(s^{p-1} + 1) \; {\rm  for \; some} \; C>0, \; {\rm some}  \; p \ge 1 \; {\rm and \; any} \; s >0; \\[2pt]
{\rm there \; exist \; positives} \; q {\rm ,} \; \delta_0 \; {\rm and } \; \bar{D} \; {\rm s.t.} \; \Aop(s) > \bar{D} s^q \; {\rm for} \; s \in [0,\delta_0].
\end{array} \right\} 
\label{basicHypothesesOna(s)}
\end{equation}

This class of operators, as the authors know, was first introduced and studied regarding the solvability of the asymptotic Dirichlet problem  in \cite{RT2}; it    includes well known geometric operators as
the $p$-laplacian, for $p >1$, ($\Aop(s)=s^{p-1}$) and the minimal graph operator  ($\Aop(s)=s/\sqrt{1+s^2}$).  Note that $\mathcal{S}$  is linearly convex that is, any two elements $\mathcal Q_1, \mathcal Q_2$ of $\mathcal{S}$ are homothopic in $\mathcal{S}$ by the line segment $t\mathcal Q_1+(1-t)\mathcal Q_2$, $0\le t\le 1.$   

As we shall see, the nature of an isolated asymptotic singularity of  $\mathcal{Q}$ depends on the asymptotic behaviour of $\Aop$ and can change drastically accordingly to it. It is worth to mention at this point that  this behaviour of $\Aop$ is closely related to the existence or not of ``Scherk type'' solutions of \eqref{generalOperator} (see the beginning of the next section). Minimal  Scherk surfaces play a fundamental role on the theory of minimal surfaces in Riemannian manifolds (a well known breakthrough result using Scherk minimal surfaces  were obtained by P. Collin and H. Rosenberg in \cite{CR}). 

\indent In our first three results we are concerned with removable singularities. We first show that isolated singularities are  removable  if $n=2, $ $M$ has negatively pinched curvature and $\Aop$  satisfies
$$
\int_0^{\infty} \Aop^{-1}(K_0 (\cosh(a r))^{-1}) \, dr = +\infty,
$$
for some $K_0 >0 $. Since $\Aop^{-1}(t) \le ct^{1/q}$ holds for small $t$, due to \eqref{basicHypothesesOna(s)}, the change of variable $t=K_0 (\cosh(a r))^{-1}$ implies that this condition is equivalent to
\begin{equation}
\int_0^{K_0} \frac{\Aop^{-1}(t)}{ \sqrt{K_0 - t}} \, dt = +\infty.
\label{IntegralCondition1Ona(t)}
\end{equation} 
Precisely, we prove:

\begin{theorem}
\label{fi}
Suppose that $M$ is a $2-$dimensional Cartan-Hadamard manifold with sectional curvature satisfying $-b^{2}\leq K\leq -a^{2}<0$, $b\geq a>0.$
 Given a finite number of points $p_{1},...,p_{k}\in
\partial _{\infty }M,$ if $m\in C^{\infty }(M)\cap  C^{0}\left( \bar{M}%
\backslash \left\{ p_{1},...,p_{k}\right\} \right) $ is a solution of
\eqref{generalOperator} in $M$, $\Aop(s)$ satisfies \eqref{basicHypothesesOna(s)} and \eqref{IntegralCondition1Ona(t)}, and $m|_{\partial _{\infty }M\setminus
\left\{ p_{1},...,p_{k}\right\} }$ extends continuously to $p_{i},$ $%
i=1,...,k,$ then $m\in C^{0}\left( \bar{M}\right) .$
\end{theorem}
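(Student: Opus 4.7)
The plan is to prove continuity of $m$ at each $p_i$ via comparison with local barriers constructed from the ODE underlying condition \eqref{IntegralCondition1Ona(t)}. Localize to one singular point $p = p_i$, set $L := \lim_{y \to p,\, y \in \partial_\infty M \setminus \{p_1,\ldots,p_k\}} m(y)$ (this limit exists by hypothesis), and aim to show $\lim_{x \to p,\, x \in M} m(x) = L$. Given $\varepsilon > 0$, I would produce a supersolution $w^+$ and a subsolution $w^-$ on a suitable ``wedge'' $\Omega$ near $p$ with $w^\pm$ controlling $m$ on the remaining boundary and with $\lim_{x\to p} w^\pm$ within $C\varepsilon$ of $L$; the comparison principle and $\varepsilon\to 0$ then conclude.

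The key building block is the one-variable function $\Phi(t) := \int_0^t \Aop^{-1}(K_0/\cosh(as))\,ds$, which satisfies $\Aop(\Phi'(t)) = K_0/\cosh(at)$ and, upon differentiating, $\Aop'(\Phi')\Phi'' = -a\tanh(at)\Aop(\Phi')$. Combining this with the chain-rule expression $\mathcal{Q}(f(d(\cdot,\gamma))) = \Aop'(f')f''(d) + \Aop(f')\Delta d$ for any complete geodesic $\gamma\subset M$, together with the Hessian comparison bound $\Delta d(\cdot,\gamma) \geq a\tanh(a\,d(\cdot,\gamma))$ valid under $K\leq -a^2$, one obtains
\[
\mathcal{Q}(\Phi(d(\cdot,\gamma))) = \Aop(\Phi'(d))\bigl(\Delta d - a\tanh(a d)\bigr) \geq 0,
\]
so $\Phi(d(\cdot,\gamma))$ is a subsolution of $\mathcal{Q}$; the identity $\mathcal{Q}(-u)=-\mathcal{Q}(u)$ then makes $-\Phi(d(\cdot,\gamma))$ a supersolution. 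The hypothesis \eqref{IntegralCondition1Ona(t)} is precisely $\Phi(\infty) = +\infty$, giving these blocks unbounded amplitude.

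To assemble the barriers, choose a neighborhood $U$ of $p$ in $\bar M$ avoiding the other $p_j$'s and so small that $|\varphi - L| < \varepsilon$ on $U\cap\partial_\infty M$, where $\varphi := m|_{\partial_\infty M\setminus\{p_1,\ldots,p_k\}}$. Pick two geodesic rays $\gamma_1,\gamma_2$ from a common base $o\in M$ with asymptotic endpoints in $U\cap\partial_\infty M$ flanking $p$; they bound a wedge $\Omega\subset M$ whose asymptotic closure is the arc of $\partial_\infty M$ between them containing $p$. Compactness of $\gamma_1\cup\gamma_2$ in $\bar M\setminus\{p_j\}$ yields $M_0:=\sup_{\gamma_1\cup\gamma_2}|m|<\infty$. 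Translating the building blocks above by suitable constants, and applying them to a geodesic $\gamma^*$ having $p$ as one of its asymptotic endpoints (so that $d(\cdot,\gamma^*)$ stays bounded on cones converging to $p$), one produces $w^+$ and $w^-$ satisfying (i)~$w^-\leq m\leq w^+$ on $\gamma_1\cup\gamma_2$; (ii)~$\liminf_{x\to y}w^+(x)\geq\varphi(y)$ and $\limsup_{x\to y}w^-(x)\leq\varphi(y)$ for $y$ in the asymptotic arc other than $p$; and (iii)~$\limsup_{x\to p}w^+(x)\leq L+C\varepsilon$, $\liminf_{x\to p}w^-(x)\geq L-C\varepsilon$. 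Applying the comparison principle for $\mathcal Q$ (valid thanks to the strict monotonicity $\Aop'>0$) on $\Omega\cap B_R(o)$ and letting $R\to\infty$ then yields $w^-\leq m\leq w^+$ on $\Omega$; (iii) and $\varepsilon\to 0$ finish the proof at $p_i$, and repeating for each $i$ gives $m\in C^0(\bar M)$.

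The main obstacle, as I see it, is the construction in the third paragraph: reconciling (i) with (iii) requires $w^+$ to be bounded near $p$ (so that $\limsup w^+\leq L + C\varepsilon$) while simultaneously being large enough on $\gamma_1\cup\gamma_2$ to dominate $m$ there, and these requirements pull in opposite directions. The unbounded range $\Phi(\infty)=+\infty$ provided by \eqref{IntegralCondition1Ona(t)} is precisely the flexibility resolving the tension, allowing the block $-\Phi(d(\cdot,\gamma^*))$ to cover an arbitrary finite range of values between $\gamma^*$ and its equidistant curves. Two-dimensionality is essential for the ``flanking'' geodesic picture near $p$, while the full pinching $-b^2 \leq K \leq -a^2$ provides the quantitative distance comparisons controlling the geometry of the wedge $\Omega$.
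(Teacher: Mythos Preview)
Your building block $\Phi(t)=\int_0^t\Aop^{-1}(K_0/\cosh(as))\,ds$ is not well defined. Condition \eqref{IntegralCondition1Ona(t)} forces $K_0=\sup\Aop$, so $\Aop^{-1}(K_0/\cosh(as))\to+\infty$ as $s\to0$; the equivalence stated in the introduction says $\int_0^\infty\Aop^{-1}(K_0/\cosh(as))\,ds$ diverges precisely because of this singularity at $s=0$, while $\int_d^\infty$ converges for every $d>0$ (the integrand decays like $e^{-as/q}$ by \eqref{basicHypothesesOna(s)}). Hence $\Phi(t)=+\infty$ for every $t>0$, and your claim that ``$\Phi(\infty)=+\infty$ provides the flexibility'' misidentifies where the blow-up occurs. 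The correct Scherk barrier (Lemma~\ref{existenceOfSomeScherk}) is $g(d)=\delta+\int_d^\infty\Aop^{-1}(K_0/\cosh(at))\,dt$: it is $+\infty$ on the geodesic and tends to the \emph{finite} constant $\delta$ at infinity, not the reverse.

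Even with the correct barrier there is no single supersolution $w^+$ on your wedge $\Omega$ satisfying (i) and (iii) together. If $\gamma^*$ has $p$ as an endpoint, then $d(\cdot,\gamma^*)=0$ along $\gamma^*\cap\Omega$, so $g(d(\cdot,\gamma^*))=+\infty$ there and $\limsup_{x\to p}w^+=+\infty$: (iii) fails. If $\gamma^*$ does not have $p$ as an endpoint, then along the rays $\gamma_1,\gamma_2$ (whose endpoints differ from those of $\gamma^*$) one has $d(\cdot,\gamma^*)\to\infty$ and hence $g\to\delta$; since $\delta$ cannot be chosen $\ge M_0$, (i) fails. Minima of such barriers inherit the same defects. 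There is a further gap in ``comparison on $\Omega\cap B_R(o)$, $R\to\infty$'': on $\partial B_R\cap\Omega$ you have no inequality between $w^+$ and $m$ without first knowing $m$ is bounded.

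The paper's proof is structurally different. It first establishes global boundedness of $m$ (itself a nontrivial step, using Scherk barriers on geodesics whose endpoints converge to each $p_i$). Then, assuming $K:=\limsup_{x\to p}m(x)>\delta$ for contradiction, it places Scherk barriers $Sh_1,Sh_2$ on two geodesics $\gamma_1,\gamma_2$ \emph{sharing $p$ as an asymptotic endpoint}: these give $m<(K+\delta)/2$ on sublevel regions $V_1,V_2$, but say nothing on the thin strip $V$ between the level curves $c_1,c_2$ (whose only asymptotic boundary point is $p$). The decisive new ingredient is a second, unrelated barrier: a radial supersolution on an annulus $B_{2\rho+1}(y)\setminus\overline{B_1(y)}$ (Lemma~\ref{lem}), slid along $\partial(V_1\cup V_2)$, which forces $\sup m$ to drop by a fixed amount $h_0-h_1-\varepsilon>0$ when one passes from a neighborhood $W_0$ of $p$ to a strictly smaller one $\tilde W$. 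This contradicts $K=\limsup_{x\to p}m$. Your outline contains neither the boundedness step nor this two-scale (Scherk plus annulus) mechanism, and the single-barrier scheme you propose cannot substitute for it.
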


We observe that condition \eqref{IntegralCondition1Ona(t)} fails if $K_0 < \sup \Aop$.
Hence, \eqref{IntegralCondition1Ona(t)} implies that $\Aop$ is bounded and $K_0=\sup \Aop$. This happens, for instance, if $\Aop(s)=s/\sqrt{1+s^2}$. Therefore, we have
\begin{corollary}
Suppose that $M$ is a $2-$dimensional Cartan-Hadamard manifold with sectional curvature satisfying $-b^{2}\leq K\leq -a^{2}<0$, $b\geq a>0.$
 Given a finite number of points $p_{1},...,p_{k}\in
\partial _{\infty }M,$ if $m\in C^{\infty }(M)\cap  C^{0}\left( \bar{M}%
\backslash \left\{ p_{1},...,p_{k}\right\} \right) $ is a solution of
the minimal surface equation and if $m|_{\partial _{\infty }M\setminus
\left\{ p_{1},...,p_{k}\right\} }$ extends continuously to $p_{i},$ $%
i=1,...,k,$ then $m\in C^{0}\left( \bar{M}\right) .$
\label{fiminimal}
\end{corollary}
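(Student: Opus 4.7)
The plan is to show that $m$ extends continuously at each $p_i$ by a Scherk-type barrier argument at infinity. Fix one singular point $p = p_1$ and let $L$ denote the continuous extension value of $m|_{\partial_\infty M\setminus\{p_1,\dots,p_k\}}$ at $p$; since $\mathcal Q$ is invariant under the addition of constants to $u$, we may assume $L = 0$ and aim to show that for every $\varepsilon>0$ there is a neighborhood $U$ of $p$ in $\bar M$ with $|m|\le\varepsilon$ on $U\cap M$.

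The building block of the barrier is the one-variable profile
\[
\omega(r) \;=\; \int_0^r \Aop^{-1}\!\bigl(K_0/\cosh(as)\bigr)\,ds,
\]
which satisfies $\Aop(\omega'(r))\cosh(ar)\equiv K_0$ by construction, and condition \eqref{IntegralCondition1Ona(t)} is precisely the statement that $\omega$ is unbounded on $[0,\infty)$. Exploiting the $2$-dimensionality, a geodesic $\gamma\subset M$ has codimension one, so $d(x) = d(x,\gamma)$ is smooth off $\gamma$; on the half-plane $H$ bounded by $\gamma$, a direct computation gives
\[
\mathcal Q(\omega(d)) \;=\; \Aop(\omega'(d))\bigl[\Delta d - a\tanh(a d)\bigr].
\]
The Laplacian comparison theorem, available since $K\le -a^2$, yields $\Delta d\ge a\tanh(a d)$ on $H$, so $\omega(d(\cdot,\gamma))$ is a subsolution of $\mathcal Q$ on $H$; since $\mathcal Q(-u) = -\mathcal Q(u)$, the translates $C - \omega(d(\cdot,\gamma))$ are supersolutions for any constant $C$. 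When $\gamma$ is chosen to end at $p$, the asymptotic convergence of any two geodesic rays at $p$ forces $d(x,\gamma)\to 0$, hence $\omega(d(x,\gamma))\to 0$, as $x\to p$.

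With this material, I would assemble the local barrier $w_\varepsilon$ on a thin wedge-shaped neighborhood $U$ of $p$ bounded by two geodesics $\gamma^\pm\subset M$ both ending at $p$ (their opposite endpoints $q^\pm$ taken close enough to $p$ that the arc of $\partial_\infty M$ bounding $U$ avoids the other $p_i$). The barrier is an appropriate combination (a pointwise minimum of two such supersolutions fits best with the ellipticity of $\mathcal Q$) of translated Scherk supersolutions $C_\pm - \omega(d(\cdot,\gamma^\pm))$, engineered so that (i) $w_\varepsilon$ is a supersolution of $\mathcal Q$ in $U\cap M$, (ii) $w_\varepsilon\ge m$ on $\partial U\cap M$ — using the boundedness of $m$ on compact subsets of $\bar M\setminus\{p_1,\dots,p_k\}$ — and (iii) $\limsup_{x\to p}w_\varepsilon(x)\le\varepsilon$. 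The comparison principle for $\mathcal Q$, valid under \eqref{basicHypothesesOna(s)}, then gives $m\le w_\varepsilon$ on $U\cap M$. A symmetric construction yields $m\ge -w_\varepsilon$, and $\varepsilon\to 0$ completes the proof.

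\textbf{Main obstacle.} The decisive step is the localization of the barrier to a single asymptotic point. A naive single-geodesic profile blows up on the whole asymptotic boundary of its half-plane away from the two endpoints of $\gamma$, and one must combine two such profiles into a barrier that remains a supersolution in the wedge, dominates $m$ on the finite part of $\partial U$, and has prescribed small lim-sup at $p$ simultaneously. The unboundedness of $\omega$, which is precisely condition \eqref{IntegralCondition1Ona(t)}, is the essential quantitative input: it supplies the ``vertical room'' needed to dominate $|m|$ on the compact boundary while still collapsing the limit at $p$ to any given $\varepsilon$. Were $\omega$ bounded, this balance would fail and removability would fail too — this borderline behavior is what the paper shows to be sharp.
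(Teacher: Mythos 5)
The paper's own proof of this corollary is a one-liner: it observes that $\Aop(s)=s/\sqrt{1+s^2}$ is bounded with $K_0=\sup\Aop=1$ and verifies that \eqref{IntegralCondition1Ona(t)} then holds, so Theorem~\ref{fi} applies directly. Your attempt instead re-derives the underlying removability result from scratch, which is legitimate in principle; but there is a fatal error in the profile function at the heart of the barrier. Under condition~\eqref{IntegralCondition1Ona(t)}, the integrand $\Aop^{-1}\!\bigl(K_0/\cosh(as)\bigr)$ is \emph{not} integrable near $s=0$: as $s\to 0$, $K_0/\cosh(as)\to K_0=\sup\Aop$, and since $\Aop$ is strictly increasing and bounded, $\Aop^{-1}(K_0)=+\infty$. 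Condition~\eqref{IntegralCondition1Ona(t)} is exactly equivalent to $\int_0^1\Aop^{-1}\!\bigl(K_0/\cosh(as)\bigr)\,ds=+\infty$; by contrast the tail $\int_1^\infty$ always converges (the $\Aop(s)\ge\bar D s^q$ hypothesis gives exponential decay). So your $\omega(r)=\int_0^r\Aop^{-1}\!\bigl(K_0/\cosh(as)\bigr)\,ds$ is $+\infty$ for every $r>0$, and the correct Scherk-type function is $g(d)=\delta+\int_d^\infty$, which is $+\infty$ at $d=0$ and decays to $\delta$ as $d\to\infty$ — the exact opposite of the behavior you posit ($\omega(0)=0$, $\omega$ blowing up as $r\to\infty$).

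This inverts the geometry of your barrier and breaks the localization. Inside the wedge $U$ pinching at $p$, $d(x,\gamma^\pm)\to 0$ as $x\to p$, so the true Scherk supersolution blows up, rather than tends to $0$, near $p$; your translates $C_\pm-\omega(d(\cdot,\gamma^\pm))$ are $-\infty$ off the geodesics (or, with the corrected profile, $+\infty$ along a whole geodesic rather than small near $p$), so requirement (iii) cannot hold. You also cannot verify (ii) without first knowing $m$ is bounded — $\partial U\cap M$ consists of the geodesics $\gamma^\pm$ which accumulate at $p$, and a priori bounds on $m$ there are exactly part of what is to be proved. The paper handles this in two separate steps: first it proves $m$ is bounded using Scherk supersolutions $+\infty$ on a sequence of geodesics $\beta_k$ pinching off $p$ (so that the infinity of the barrier, not of $m$, sits on the finite boundary), and then it runs a $\limsup$ contradiction in which the decisive tool is not a Scherk supersolution at all but an \emph{annulus} supersolution $w_y=f(\operatorname{dist}(\cdot,y))$ over $B_{2\rho+1}(y)\setminus\overline{B_1(y)}$ (Lemma~\ref{lem}), which supplies the quantitative drop $h_0-h_1>0$ needed to push the $\limsup$ strictly below $K$. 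That second tool, and the boundedness step, are both missing from your plan, and the Scherk profile alone cannot play their role.
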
  

We observe that a similar problem can obviously be posed to solutions of \eqref{generalOperator} on a bounded $C^{0}$ domain $\Omega $ of $%
\mathbb{R}^{2}.$ In the minimal case, this a an old problem. From a classical result of R. Finn \cite{F},  it follows that if $u, $ as in the above theorem,  with $M$ replaced by $\Omega,$ $\partial_\infty$ by $\partial,$ is a solution of the minimal graph equation \eqref{min}  and if there there is a solution $v\in C^{\infty }(\Omega)\cap  C^{0}\left( \bar{\Omega}%
 \right)$ of \eqref{min}  such that $$u|_{\partial\Omega\setminus
\left\{ p_{1},...,p_{n}\right\} }=v|_{\partial\Omega\setminus
\left\{ p_{1},...,p_{n}\right\} }$$ 
then $u=v$ and hence $u$ extends continuously through the singularities.
If the Dirichlet problem $\mathcal{M}(u)=0$ on $\Omega $ is not solvable for
the continuous boundary data $\phi:=u|_{\partial\Omega}$  then the result is false, a known fact on the  classical minimal surface theory (see \cite{N}, Chapter V, Section 3).  We remark that even if the Dirichlet problem is not solvable there might exist smooth compact minimal surfaces which boundary  is the graph of $\phi$ if $\phi$ and the domain are regular enough  (see \cite{B}).

Although under the hypothesis of Corollary \ref{fiminimal} there exists a solution 
$v\in C^{\infty }(M)\cap  C^{0}\left( \bar{M} \right) $ of (\ref{min}) such that
$u|_{\partial_{\infty} M\setminus
\left\{ p_{1},...,p_{n}\right\} }=v|_{\partial_{\infty}M\setminus
\left\{ p_{1},...,p_{n}\right\} },$
we felt necessary to use a different approach from Finn's. First because  the boundedness of the domain is fundamental to the arguments used in \cite{F}.  Secondly, because  it is not clear that the asymptotic Dirichlet problem for the PDE (\ref{generalOperator}), under the conditions 
(\ref{basicHypothesesOna(s)}),
 is solvable for any continuous  boundary data given at infinity.

Our proof relies  heavily on asymptotic properties of $2-$dimensional Cartan-Hadamard manifolds. It is fundamentally based on  the fact that a point $p$ of the asymptotic boundary of $M$ is an isolated point of the asymptotic boundary of a domain $U$ such that $M\setminus U$ is convex. This property allows the construction of suitable barriers at infinity. Although the existence of $U$ in the $n=2$ dimensional case is trivial (for example, a domain which boundary are two geodesics asymptotic to $p$), we don't know if such an $U$ exists in $M$  if $n\geq 3$. Nevertheless,  it is possible in the special case of the hyperbolic space  to give an ad hoc proof of Theorem \ref{fi} using the symmetries of the space. Precisely, our result in $\mathbb{H}^n$ reads:
 
 \begin{theorem}
\label{sec}
Let $\mathbb{H}^n$ be the hyperbolic space of constant section curvature $-1$.  Given a finite number of points $p_{1},...,p_{k}\in \partial_{\infty} \mathbb{H}^n$,
if $m\in C^{\infty }(\mathbb{H}^n)\cap  C^{0}\left( \bar{\mathbb{H}^n}%
\backslash \left\{ p_{1},...,p_{k}\right\} \right) $ is a solution of \eqref{generalOperator} in $\mathbb{H}^n$, $\Aop(s)$ satisfies \eqref{basicHypothesesOna(s)} and \eqref{IntegralCondition1Ona(t)}, and if $m|_{\partial _{\infty }\mathbb{H}^n\setminus
\left\{ p_{1},...,p_{k}\right\} }$ extends continuously to $p_{i},$ $%
i=1,...,k,$ then $m\in C^{0}\left( \bar{\mathbb{H}^n}\right) .$
\end{theorem}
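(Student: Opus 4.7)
I would carry out the same barrier-and-comparison strategy used in the proof of Theorem~\ref{fi}, with the 2-dimensional convex-complement trick replaced by barriers built from the abundant isometry group of $\mathbb{H}^n$. After localising the argument at one singular point $p=p_1$ and noting that constants satisfy~\eqref{generalOperator}, I may subtract $L:=\lim_{q\to p}\psi(q)$, where $\psi:=m|_{\partial_\infty\mathbb{H}^n\setminus\{p_j\}}$, and reduce the claim to: for every $\varepsilon>0$ there is a neighbourhood $V$ of $p$ in $\bar{\mathbb{H}^n}$ on which $|m|\le\varepsilon$.

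In the ball model, I fix a Euclidean ball $B_\delta(p)$ missing the other $p_j$ and let $V_\delta:=B_\delta(p)\cap\bar{\mathbb{H}^n}$. The lateral boundary $\Sigma_\delta:=\partial B_\delta(p)\cap\bar{\mathbb{H}^n}$ is compact and avoids every $p_j$, so $|m|\le M_\delta$ on $\Sigma_\delta$ by continuity, and shrinking $\delta$ further ensures $|\psi|<\varepsilon/2$ on $B_\delta(p)\cap\partial_\infty\mathbb{H}^n$. It then suffices to produce a supersolution $w^+$ of $\mathcal Q$ on $V_\delta$ with $w^+\ge M_\delta$ on $\Sigma_\delta$ and $w^+\to\varepsilon$ at $p$, together with its mirror $w^-$, and invoke the comparison principle for $\mathcal Q$.

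To build $w^+$ I exploit the $SO(n-1)$ action by rotations about the diameter $\gamma$ through $p$ and $-p$. In Fermi coordinates $(r,z,\theta)$ around $\gamma$, where the hyperbolic metric is $dr^2+\cosh^2 r\,dz^2+\sinh^2 r\,d\Omega_{n-2}^2$, a function $f=f(r)$ solves $\mathcal Q(f)=0$ iff
\[
\cosh r\,\sinh^{n-2}r\,\cdot\,\Aop(f'(r))=K,\qquad K=\text{const},
\]
and the change of variables $t=K\,(\cosh r\,\sinh^{n-2}r)^{-1}$ translates \eqref{IntegralCondition1Ona(t)} into the statement that such an $f$ can be chosen to carry a prescribed jump between the inner boundary and the asymptotic value with finite total variation. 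Gluing this radial profile to a Scherk-type wall along a horosphere at $p$---made available by the parabolic subgroup of $\mathrm{Stab}(p)\subset\mathrm{Isom}(\mathbb{H}^n)$, which has no counterpart in a generic Cartan--Hadamard manifold---yields the required $w^+$ on $V_\delta$. The dual subsolution $w^-$ is built by the same recipe with reversed signs, and letting $\varepsilon\to 0$ extends $m$ continuously to $p$.

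\textbf{Main obstacle.} The crux is producing $w^+$ with the correct limit at $p$: condition \eqref{IntegralCondition1Ona(t)} forces $\sup\Aop<\infty$, so the radial ODE above develops a vertical tangent where $\Aop(f')$ saturates at $K_0:=\sup\Aop$ short of reaching $p$; the divergence of the integral in \eqref{IntegralCondition1Ona(t)} is precisely what guarantees enough total variation of $f$ past this critical radius to close the prescribed gap, and the Scherk-wall construction using the parabolic subgroup is what upgrades the one-sided radial profile to a genuine supersolution on a neighbourhood of $p$. It is exactly here that the ad hoc proof in $\mathbb{H}^n$ substitutes the parabolic symmetry of hyperbolic space for the 2-dimensional convex-complement construction underlying Theorem~\ref{fi}.
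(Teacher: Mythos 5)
Your overall strategy (localize at $p$, subtract the continuous limit, use barriers and comparison) matches the spirit of the paper, but the crucial barrier you propose does not exist, and the paper's proof in fact takes a genuinely different route.

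The paper does \emph{not} construct a one-shot supersolution $w^+$ on a fixed neighbourhood $V_\delta$ of $p$; as the authors note after Theorem~\ref{fi}, they could not find a direct analogue of the $n=2$ barrier construction for $n\geq3$. Instead they run a rescaling argument: take a decreasing sequence of totally geodesic hyperballs $V_j$ centred at $p$ with $\bigcap\overline V_j=\{p\}$, pick $x_j\in V_j$ realizing $m(x_j)>K-1/j$ where $K=\limsup_{x\to p}m$, and use hyperbolic isometries $T_j$ fixing $p$ that carry $\tilde V_j\subset V_j$ onto a fixed hyperball $A$, with $y_j:=T_j(x_j)\in\partial A$. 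The rescaled solutions $u_j=m\circ T_j^{-1}$ are then controlled on the fixed compact set $\partial A$ by two kinds of barriers: Scherk walls on totally geodesic hyperballs $B_q$ cover the asymptotic equator of $A$, and the compact remainder $\partial A\setminus\tilde B$ is handled by the annulus barrier of Lemma~\ref{lem}. The contradiction $u_j(y_j)\leq K-\nu$ versus $u_j(y_j)>K-1/j$ finishes the proof. The isometries are exactly the ``ad hoc'' use of the symmetry of $\mathbb H^n$; they replace, rather than supplement, a direct barrier near $p$.

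Your proposed barrier has a concrete gap at the ``gluing'' step. The Scherk wall of Lemma~\ref{existenceOfSomeScherk} blows up on a geodesic (or totally geodesic hypersurface) $\Lambda$ precisely because $\Delta d$ behaves like $a\tanh(ad)$ near $\Lambda$, so the substitution $t=K_0(\cosh(at))^{-1}$ produces the weight $1/\sqrt{K_0-t}$ that makes $\int_0^{K_0}\Aop^{-1}(t)/\sqrt{K_0-t}\,dt$ diverge under~\eqref{IntegralCondition1Ona(t)}. For a horosphere at $p$ the picture is entirely different: the signed distance $d$ to the horosphere has $\Delta d=\pm(n-1)$ (constant), so the radial ODE gives $\Aop(g'(d))=Ce^{\mp(n-1)d}$, and the substitution $t=Ce^{\mp(n-1)d}$ yields $dd=\mp dt/\big((n-1)t\big)$ --- a \emph{bounded} weight near $t=K_0$. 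For instance with $\Aop(s)=s/\sqrt{1+s^2}$, $\Aop^{-1}(t)=t/\sqrt{1-t^2}$ and $\int^1\Aop^{-1}(t)\,dt<\infty$, so $g(0)$ is finite: a ``Scherk-type wall along a horosphere'' does not blow up under~\eqref{IntegralCondition1Ona(t)} and cannot serve as the infinite wall you need. Independently, the Fermi radial profile $f(r)$ (distance to the diameter $\gamma$ through $p$) has no well-defined limit at $p$: as $x\to p$ the value of $r(x)$ can approach any number in $[0,\infty]$ depending on the direction of approach, so $f(r(x))$ does not converge to $\varepsilon$ at $p$. Both halves of your proposed $w^+$ therefore fail to do what is required, and the isometry/rescaling trick of the paper is not a cosmetic variant but the essential new idea that replaces the missing $n\geq3$ barrier.
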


Finally, in the next last result, we prove the existence of a class of solutions of \eqref{generalOperator} in $\mathbb{H}^n$  admiting a non removable isolated asymptotic singularity. Note that this class contains the $p-$Laplacian PDE, $p>1.$
\begin{theorem}
Suppose that \eqref{basicHypothesesOna(s)} holds and $\Aop(s)$ is unbounded.  Given a point $p_{1}\in \partial_{\infty} \mathbb{H}^n$,
there exists a solution $m\in C^{\infty }(\mathbb{H}^n)\cap  C^{0}\left( \bar{\mathbb{H}^n}
\backslash \left\{ p_{1} \right\} \right) $  of \eqref{generalOperator} in $\mathbb{H}^n$, such that $m=0$ on $\partial_{\infty} \mathbb{H}^n \backslash \{p_1\}$ and 
$\limsup_{x \to p_1} m = +\infty.$
\label{nonexistenceThm}
\end{theorem}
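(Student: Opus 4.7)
The plan is to exhibit $m$ as a horosphere-invariant solution in the upper half-space model. After applying an ambient isometry, we may assume $p_1=\infty$ in $\mathbb{H}^n=\mathbb{R}^{n-1}\times(0,\infty)$ with the metric $y^{-2}(|dx|^2+dy^2)$. Set $s(x,y):=\log y$, which (up to sign) is the Busemann function based at $p_1$; one checks directly that $|\nabla s|\equiv 1$ and $\Delta s\equiv -(n-1)$. The ansatz is $m=m(s)$ with $m'>0$, which is the natural choice since we want the solution to vanish at every point of $\partial_\infty\mathbb{H}^n$ other than $p_1$ and to be invariant under the horospherical translations fixing $p_1$.

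Under this ansatz, \eqref{generalOperator} reduces to the ODE
\begin{equation*}
\Aop'(m'(s))\,m''(s)-(n-1)\,\Aop(m'(s))=0,
\end{equation*}
which integrates at once to $\Aop(m'(s))=Ce^{(n-1)s}$ for any constant $C>0$. Because $\Aop$ is assumed unbounded, $\Aop^{-1}$ is defined on all of $[0,\infty)$, so I would take
\begin{equation*}
m(s):=\int_{-\infty}^{s}\Aop^{-1}\!\bigl(Ce^{(n-1)\tau}\bigr)\,d\tau.
\end{equation*}
The lower bound $\Aop(t)\ge\bar D t^q$ on $[0,\delta_0]$ from \eqref{basicHypothesesOna(s)} yields $\Aop^{-1}(t)\le(t/\bar D)^{1/q}$ for small $t$, dominating the integrand near $-\infty$ by $c\,e^{(n-1)\tau/q}$, so the improper integral converges and $m\to 0$ as $s\to-\infty$.

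It then remains to verify the boundary behaviour in the cone topology. A sequence $(x_k,y_k)$ converging to a finite boundary point in $\mathbb{R}^{n-1}\times\{0\}$ necessarily has $y_k\to 0$ (if $y_k$ stayed bounded away from zero, the only way to reach $\partial_\infty\mathbb{H}^n$ would be to diverge horizontally, which in the cone topology yields $p_1=\infty$). Hence $s\to-\infty$ and $m\to 0$ on $\partial_\infty\mathbb{H}^n\setminus\{p_1\}$. Along the vertical ray $(0,y)$ with $y\to\infty$, on the other hand, $s\to+\infty$ and unboundedness of $\Aop$ forces $\Aop^{-1}(Ce^{(n-1)\tau})\to\infty$, so the integrand is eventually $\ge 1$ and $m(s)\to+\infty$, producing the non-removable singularity $\limsup_{x\to p_1}m=+\infty$. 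Interior regularity is immediate from the explicit formula and $m'>0$, which keeps the equation uniformly non-degenerate along the solution.

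The only real obstacle is recognising that unboundedness of $\Aop$ is the sharp hypothesis for this horosphere-invariant construction: if $\sup\Aop=K_0<\infty$, then $\Aop^{-1}(Ce^{(n-1)s})$ is only defined for $s<(n-1)^{-1}\log(K_0/C)$ and the ansatz breaks down after a finite Busemann range, in complete agreement with the dichotomy foreshadowed by condition \eqref{IntegralCondition1Ona(t)} in the introduction.
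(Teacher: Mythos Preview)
Your proposal is correct and follows essentially the same approach as the paper: both reduce the problem to a horosphere-invariant ansatz, observe that the Busemann/signed-distance function has $\Delta d=-(n-1)$, obtain the same ODE, and solve it via $g(d)=\int_{-\infty}^{d}\Aop^{-1}(Ce^{(n-1)\tau})\,d\tau$ with convergence at $-\infty$ coming from the lower bound $\Aop(t)\ge\bar D t^{q}$ in \eqref{basicHypothesesOna(s)}. The only difference is presentational: you work in the explicit upper half-space model with $s=\log y$, while the paper phrases everything intrinsically via the signed distance to a fixed horosphere.
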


\section{Proof of the theorems}
We begin by constructing Scherk type supersolutions to the equation \eqref{generalOperator}, which are fundamental to prove the nonexistence of true asymptotic singularities. 
\begin{lemma}
Let $\gamma$ be some geodesic of $M$, let $U$ be one of the connected component of $M\backslash \gamma$ and $\delta > 0$. If $\Aop$ satisfies \eqref{basicHypothesesOna(s)} and \eqref{IntegralCondition1Ona(t)}, then there exists a solution of 
$$ \left\{ \begin{array}{rcl}   {\rm div} \left(\displaystyle \frac{\Aop (|\nabla u|)}{|\nabla u |} \nabla u \right) & \leq & 0 \quad {\rm in } \quad U \\[5pt]
                                                                                u   & = & +\infty \quad {\rm on } \quad  \gamma \\[5pt]
                                                                                u   & = & \delta \quad {\rm in } \quad {\rm int} \; \partial_{\infty} U. \\ \end{array} \right. $$
\label{existenceOfSomeScherk}
\end{lemma}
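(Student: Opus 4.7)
The plan is to look for a radial supersolution depending only on the distance $r(x)=d(x,\gamma)$ to the geodesic $\gamma$, of the form $u(x)=f(r(x))$ with $f$ strictly decreasing, $f(0^+)=+\infty$ and $f(+\infty)=\delta$. Since $M$ is $2$-dimensional, $M\setminus\gamma$ splits into two half-planes, $|\nabla r|=1$ on $U$, and the function $\Delta r$ is the geodesic curvature of the curve equidistant to $\gamma$ at distance $r$; Hessian comparison against the space form of curvature $-a^2$ (possible because $K\le -a^2$) gives the one-sided bound $\Delta r\ge a\tanh(ar)$ on $U$.

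Writing $\phi(s)=\Aop(|s|)\,\mathrm{sgn}(s)$, so that $\mathcal{Q}(u)=\operatorname{div}(\phi(f'(r))\nabla r)=\Aop'(|f'|)f''+\phi(f')\Delta r$, and introducing $v=-f'>0$, the supersolution inequality $\mathcal{Q}(u)\le 0$ becomes
\[
\Aop'(v)\,v' + \Aop(v)\,\Delta r \ge 0,
\qquad\text{equivalently}\qquad
\frac{d}{dr}\log\Aop(v(r))\ge -\Delta r.
\]
Using $\Delta r\ge a\tanh(ar)$, it is sufficient to impose $\tfrac{d}{dr}\log\Aop(v(r))=-a\tanh(ar)$, which integrates to
\[
\Aop(v(r))=\frac{K_0}{\cosh(ar)},\qquad v(r)=\Aop^{-1}\!\bigl(K_0/\cosh(ar)\bigr),
\]
for a constant $K_0>0$. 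I would then set $K_0:=\sup\Aop$ (necessarily finite under hypothesis \eqref{IntegralCondition1Ona(t)}, as the authors note) so that $v(0^+)=+\infty$, and define
\[
f(r):=\delta+\int_r^{\infty}v(s)\,ds.
\]
The decay of $v$ at infinity (exponential in $r$) makes $f$ well-defined with $f(\infty)=\delta$, while the divergence $f(0^+)=+\infty$ is exactly the content of \eqref{IntegralCondition1Ona(t)} after the change of variable $t=K_0/\cosh(ar)$ that the authors exhibit.

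It then remains to verify the boundary behaviour of $u=f\circ r$: on $\gamma$ we have $r=0$, so $u=+\infty$; and at a point $p\in\mathrm{int}\,\partial_\infty U$, any sequence $x_n\to p$ in the cone topology has $d(x_n,\gamma)\to\infty$ (since the only points of $\partial_\infty M$ accessible at bounded distance from $\gamma$ are its two endpoints), whence $u(x_n)\to\delta$. The supersolution inequality is inherited from the ODE construction and the comparison $\Delta r\ge a\tanh(ar)$, giving $\mathcal{Q}(u)\le 0$ in $U$.

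The main obstacle I anticipate is the careful justification of the Laplacian comparison $\Delta r\ge a\tanh(ar)$ on an exact $2$-dimensional Cartan–Hadamard manifold for the \emph{distance to a geodesic} (not to a point), which requires a Jacobi-field / Riccati argument along normal geodesics to $\gamma$; once this bound is in hand, the rest is a direct integration of the constructed ODE and a smoothness check that $u=f\circ r$ is $C^2$ on $U$ (so that the divergence computation is classical) and that the supersolution inequality passes through the limit at $\gamma$ in the sense of comparison principles.
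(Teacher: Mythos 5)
Your proposal is correct and follows essentially the same route as the paper: you construct the radial supersolution $u=f\circ r$ with $f(r)=\delta+\int_r^\infty \Aop^{-1}\!\bigl(K_0/\cosh(as)\bigr)\,ds$, $K_0=\sup\Aop$, which is exactly the function $g$ used in the paper's proof. The only difference is that the paper delegates the verification that $g(d(\cdot))$ is a supersolution (and the Laplacian comparison $\Delta r \ge a\tanh(ar)$ for the distance to a geodesic under $K\le -a^2$) to a citation of \cite{RT2}, whereas you carry out the ODE reduction and comparison-geometry step explicitly; both handle the boundary behaviour at $\gamma$ and at $\mathrm{int}\,\partial_\infty U$ identically.
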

\begin{proof} Let $d:U \to \mathbb{R}$ be defined by $d(x)= dist(x,\gamma)$ and $g: (0,+\infty) \to \mathbb{R}$ be defined by
$$ g(d) = \delta + \int_d^{\infty} \Aop^{-1} \left( \frac{K_0 }{\cosh (a t )}\right) \; dt, $$
where $K_0 = \sup \Aop$. Observe that according to \cite{RT2}, $g(d)$ is well defined and finite for all $d > 0$, and $v(x):= g(d(x))$ is a supersolution of \eqref{generalOperator}. Moreover, $g(d) \to \delta$ as $d \to +\infty$ and, therefore, $g(d(x)) \to \delta$ as $x \to p \in \partial_{\infty} U$. That is, $v = \delta$ on ${\rm int} \; \partial_{\infty} U.$ 
Finally, making the change of variable $z=K_0 (\cosh (a t ))^{-1} $, we can prove that condition \eqref{IntegralCondition1Ona(t)} implies that  $g(d) \to + \infty$ as $d \to 0$. Hence $v(x)=g(d(x)) \to +\infty$ as $x \to x_0 \in \gamma$, completing the lemma.
\end{proof}

\subsection{Proof of Theorem \ref{fi}}
\qquad We first claim that $m$ is bounded: 
For each $p_i$, consider a geodesic $\Gamma_i$ such that the asymptotic boundary of one of the connected components of $M\setminus\Gamma_i$, say $X_i$, does not contain $p_j$ for $j \ne i$. Assume also that $p_i \in {\rm int}\; \partial_{\infty} X_i$. Since $\Gamma_i(\pm \infty) \not\in \{p_1, \dots \, p_n\}$, $m$ is continuous at $\Gamma_i(\pm \infty)$ and therefore it is bounded on $\Gamma_i$. Let $ S_i = \displaystyle \sup_{\Gamma_i}m$ for $i \in \{1,\dots n\}$, $S_0 = \sup m|_{\partial _{\infty }M\setminus
\left\{ p_{1},...,p_{n}\right\} } $ and $$ S=\max\{S_0, S_1, \dots, S_n \}.$$ 
From the maximum principle, $m \le S$ in $M\backslash \{X_1 \cup \dots \cup X_n \}$. To prove that $m \le S$ in $X_i$, take a sequence of geodesics $\beta_k$ such that the ending points $\beta_k(+\infty)$ and $\beta_k(-\infty)$ converge to $p_i$. Let $Y_k$ be the connected component of $M\backslash \beta_k$ whose the asymptotic boundary does not contain $p_i$. Observe that $M \backslash X_i \subset Y_k$ for large $k$ and $\cup Y_k = M$.  Let $w_k$ be the supersolution of (\ref{generalOperator}) given by Lemma \ref{existenceOfSomeScherk}. Recall that $w_k$
is $+\infty$ on $\beta_k$ and $S$ at $\partial_{\infty}Y_k \setminus\{\beta_k(\pm\infty)\}$. Hence $w_k \ge S$ and therefore $w_k \ge m$ on $\Gamma_i =\partial X_i$, $w_k = S \ge m$ on $\partial_{\infty}(X_i \cap Y_k)$ and $w_k=+\infty > m$ on $\beta_k = \partial Y_k$. Then $w_k \ge m$ in $Y_k \cap X_i$ for large $k$ by the Comparison Principle. For any given $x\in M$, $x \in Y_k$ for
large $k$. Hence, using that $w_k(x) \to S$, we have $m(x)\le S$. In a similar way, we can conclude that $m$ is bounded from below, proving the claim.  

Assume that $m\leq S$. Denote by $\phi$ the continuous extension of \\ $m|_{\partial _{\infty }M\setminus\left\{ p_{1},...,p_{n}\right\} }$ to $\partial _{\infty }M.$
Let $p\in\{p_{1},...,p_{n}\}$.  Adding a constant to $\phi$ we may assume  wlg that $\phi(p)=0.$ Let $0<\delta\leq S$ be given.
We will prove that   $K:=\limsup_{x \to p} m(x)\leq \delta.$   By contradiction assume that that $K > \delta.$

  By the continuity of $\phi,$ there exists an open connected neighborhood $\mathcal{O}\subset \partial_{\infty}M$ of $p$ such that $\phi(q)\le\delta$ for all $q\in \mathcal{O}.$
Moreover, we may assume that  $\mathcal{O}$ does not contain another point $p_{i}$ except
$p.$

Let $\gamma$ be a geodesic such that $\gamma(\infty)=p.$ Set $\gamma=\gamma(\mathbb{R})$. Choose a point $q_0\in \gamma$ and a geodesic $\alpha_0$ orthogonal to $\gamma$ at $q_0$ such that $\alpha_0(\pm\infty)\in \mathcal{O}.$ 
Let $\gamma_i$, $i\in\{1,2\}$, be the geodesics with ending points at $p$ and $q_1:=\alpha_0(\infty)$ and  $p$ and $q_2:=\alpha_0(-\infty),$ respectively. Denote by $U_i$ the connected component of $M\setminus\gamma_i$ that does not contain $\alpha_0.$
As before, there exists $Sh_i$ solution of 
 $$ \left\{ \begin{array}{rcl}   {\rm div} \left(\displaystyle \frac{\Aop (|\nabla u|)}{|\nabla u |} \nabla u \right) & \leq & 0 \quad {\rm in } \quad U_i \\[5pt]
                                                                                u   & = & +\infty \quad {\rm on } \quad  \gamma_i \\[5pt]
                                                                                u   & = & \delta \quad {\rm in } \quad {\rm int} \; \partial_{\infty} U_i. \\ \end{array} \right. $$

\                                                                               
                                                                                
\noindent Observe that $m < Sh_i$. Let $c_i$ be the level set of $Sh_i$
$$c_i = \left\{ x \in M \; : \; Sh_i(x) = \frac{K}{2}+\frac{\delta}{2} \right\}$$
and
$$ V_i = \left\{ x \in  U_i \; : \; Sh_i(x) < \frac{K}{2}+\frac{\delta}{2} \right\}$$
Hence $m < K/2+\delta/2$ on $V_i.$ 
Let $V = A \backslash (V_1 \cup V_2)$.

\

Now, let $W$ be a neighborhood of $p$ (a ball centered at $p$) such that the asymptotic boundary of $W \cap V$ is $\{p\}$. Observe that for  $R>0$ and any point $z$ on the boundary of $W \cap V$ there exist a ball of radius $R$, $B_R \subset M \backslash (W \cap V)$
such that $B_R \cap \overline{W\cap V} = \{z\}.$  We consider $R=1$.

Since $p$ is an ending point of both $\gamma_1$ and $\gamma_2$, the distance between any point of $W \cap V$ and the geodesic $\gamma_i$ is bounded by some constant.
This property still holds if we consider the curve $c_i$ instead $\gamma_i$, since these two curves are equidistant. Then there is $\rho > 0$ be such that 
$$ \operatorname*{dist}(x, V_i) < \rho \quad \quad {\rm for \; any } \; x \in W \cap V. $$
That is, for any $x \in W \cap V$, there is a ball $B_{\rho}$ centered at some point of $\partial (V_1 \cup V_2) \cap W$
s.t. $x \in B_{\rho}$.

\

\begin{minipage}{6.5cm}
\begin{tikzpicture}[scale=0.8,domain=0:4]
\draw (0,0) circle  (3);

\draw(0,3)--(0,-3);
\foreach \y in {6}
{
\pgfmathatan{3/(2*\y - 3)};
\draw[domain=315-\pgfmathresult:315+\pgfmathresult, variable=\t, color=black] plot ({sqrt(2*\y*\y-6*\y+9)*cos(\t) - \y},{sqrt(2*\y*\y-6*\y+9)*sin(\t) + \y});
}

\foreach \y in {6}
{
\pgfmathatan{3/(2*\y - 3)};
\draw[domain=45-\pgfmathresult:45+\pgfmathresult, variable=\t, color=black] plot ({sqrt(2*\y*\y-6*\y+9)*cos(\t) - \y},{sqrt(2*\y*\y-6*\y+9)*sin(\t) - \y});
}

\node at (-3.3, 0) {$p$};
\node at (-0.7, 0) {$V$};
\node at (0, 3.3) {$q_1$};
\node at (0, -3.3) {$q_2$};

\node at (-1.5, 1.8) {$V_1$};
\node at (-1.5, -1.8) {$V_2$};
\node at (-0.7, 1.2) {$c_1$};
\node at (-0.7, -1.2) {$c_2$};
\end{tikzpicture}
\end{minipage}
\begin{minipage}{6.5cm}
\begin{tikzpicture}[scale=0.8, domain=0:4]
\draw (0,0) circle  (3);

\foreach \y in {6}
{
\pgfmathatan{3/(2*\y - 3)};
\draw[domain=315-\pgfmathresult:315+\pgfmathresult/3, variable=\t, color=black] plot ({sqrt(2*\y*\y-6*\y+9)*cos(\t) - \y},{sqrt(2*\y*\y-6*\y+9)*sin(\t) + \y});
}

\foreach \y in {6}
{
\pgfmathatan{3/(2*\y - 3)};
\draw[domain=45-\pgfmathresult/3:45+\pgfmathresult, variable=\t, color=black] plot ({sqrt(2*\y*\y-6*\y+9)*cos(\t) - \y},{sqrt(2*\y*\y-6*\y+9)*sin(\t) - \y});
}

\foreach \y in {6}
{
\pgfmathatan{3/(2*\y - 3)};
\draw[domain=0:360, variable=\t, color=black] plot ({0.8*cos(\t) + (sqrt(2*\y*\y-6*\y+9)-0)*cos(45+\pgfmathresult/2)-\y},{0.8*sin(\t) + (sqrt(2*\y*\y-6*\y+9)-0)*sin(45+\pgfmathresult/2)- \y});
\draw[domain=0:360, variable=\t, color=black, fill=black] plot ({0.05*cos(\t) + (sqrt(2*\y*\y-6*\y+9)-0)*cos(45+\pgfmathresult/2)-\y},{0.05*sin(\t) + (sqrt(2*\y*\y-6*\y+9)-0)*sin(45+\pgfmathresult/2)- \y});
\draw[domain=0:360, variable=\t, color=black, fill=black] plot ({0.05*cos(\t) + 0.2+(sqrt(2*\y*\y-6*\y+9)-0)*cos(45+\pgfmathresult/2)-\y},{0.05*sin(\t) + 0.4+ (sqrt(2*\y*\y-6*\y+9)-0)*sin(45+\pgfmathresult/2)- \y});
}

\foreach \y in {40}
{
\draw[domain=\y-90:90-\y, variable=\t, color=black] plot ({3*tan(\y)*cos(\t) - 3/cos(\y)},{3*tan(\y)*sin(\t)});
}

\node at (-3.3, 0) {$p$};

\node at (-0.5, 2.0) {$c_1$};
\node at (-0.5, -2.0) {$c_2$};
\node at (-0.8, -0.3) {\textcolor{black}{$B_{\rho}$}};
\node at (-2.1, -0.15) {$x$};
\node at (-1.2, 0.7) {$W$};

\end{tikzpicture}
\end{minipage}
\\ \\
\null \hspace{2.8cm} Fig. 1 \hspace{5.5cm} Fig. 2

\

\
\begin{lemma}
\label{lem}
There exist $h_0$  and $h_1$  depending only on $b$, $\rho$, $K$ and $\delta$, satisfying $$\delta<h_1<h_0<K/2+\frac{\delta}{2}$$ such that, for any  $y\in M,$
the Dirichlet problem in the annulus $B_{2\rho + 1}(y) \backslash \overline{B_1(y)}$

$$ \left\{ \begin{array}{rcl}   {\rm div} \left( \displaystyle \frac{\Aop (|\nabla u|)}{|\nabla u |} \nabla u \right) & = & 0 \quad {\rm in } \quad B_{2\rho + 1}(y) \backslash \overline{B_1(y)}\\[5pt]
                                                                                u   & = & \delta \quad {\rm on } \quad \partial B_1(y) \\[5pt]
                                                                                u   & = & h_0 \quad {\rm on } \quad \partial B_{2\rho + 1}(y) \\ \end{array} \right. $$
has a supersolution $w_y(x) $ and  $ w_y(x) \le h_1$  if   $\operatorname*{dist}(x,y) < \rho +1.$
\end{lemma}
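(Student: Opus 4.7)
The plan is to build $w_y$ as a radial profile about $y$: write $r(x)=\operatorname{dist}(x,y)$ and seek $w_y(x)=f(r(x))$. The key uniformity in $y$ comes from Laplacian comparison: since $\dim M=2$ and $K\ge -b^2$, one has
\[
\Delta r \le b\coth(br)\le b\coth(b)=:C_b\qquad\text{on }\{r\ge 1\},
\]
independently of $y\in M$. For a $C^2$ increasing $f$ a direct computation gives
\[
\operatorname{div}\!\Bigl(\tfrac{\Aop(|\nabla w_y|)}{|\nabla w_y|}\nabla w_y\Bigr) = \Aop'(f')f''+\Aop(f')\Delta r,
\]
so $w_y$ is a supersolution as soon as $(\Aop\circ f')'\le -C_b\,\Aop(f')$, and it suffices to take equality here.

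Solving $(\Aop\circ f')'= -C_b\,\Aop(f')$ with the normalization $\Aop(f'(1))=A_0$ gives $\Aop(f'(r))=A_0\,e^{-C_b(r-1)}$, and hence
\[
f(r):=\delta+\int_1^r \Aop^{-1}\!\bigl(A_0\,e^{-C_b(s-1)}\bigr)\,ds,\qquad r\in[1,2\rho+1],
\]
where $A_0\in(0,K_0)$ and $K_0=\sup\Aop$ is finite by \eqref{IntegralCondition1Ona(t)}. Then $f(1)=\delta$, $f$ is smooth and strictly increasing, and the map $A_0\mapsto f(2\rho+1)-\delta$ is continuous and strictly increasing from $(0,K_0)$ onto $(0,G^*)$ for some $G^*\in(0,+\infty]$. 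Using $K>\delta$ from the contradiction hypothesis in the proof of Theorem~\ref{fi}, I pick $h_0-\delta\in(0,\min\{G^*,(K-\delta)/2\})$ and let $A_0$ be the corresponding parameter matching $f(2\rho+1)=h_0$; finally, set $h_1:=f(\rho+1)$.

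By construction $w_y=\delta$ on $\partial B_1(y)$, $w_y=h_0$ on $\partial B_{2\rho+1}(y)$, and $w_y(x)=f(r(x))\le f(\rho+1)=h_1$ whenever $r(x)\le \rho+1$. Strict monotonicity of $f$ yields $\delta<h_1<h_0<K/2+\delta/2$, and the profile depends on $y$ only through the uniform bound $C_b$, hence only on $b,\rho,K,\delta$ as required. The main obstacle I anticipate is arranging, with a single one-parameter family, that both Dirichlet levels and the strict intermediate bound $h_1<h_0$ are met with constants depending only on the listed parameters; the radial ansatz combined with the ODE approach handles this cleanly, because the profile smoothly interpolates between the two boundary values and the intermediate estimate $h_1<h_0$ is automatic from strict monotonicity of $f$.
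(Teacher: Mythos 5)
Your proposal is correct and uses essentially the same approach as the paper: a radial supersolution $w_y = f(\operatorname{dist}(\cdot,y))$ built via Laplacian comparison, with a free parameter ($A_0$ for you, $\alpha$ for the paper) chosen so that $f(2\rho+1) < K/2 + \delta/2$. The only difference is cosmetic: you use the uniform bound $\Delta r \le b\coth b$ on $\{r\geq 1\}$ and the exponential profile $\Aop(f'(r)) = A_0 e^{-C_b(r-1)}$, whereas the paper uses the pointwise bound $\Delta r \le b\coth(br)$ and the exact model-space profile $\Aop(f'(r)) = \sinh(b\alpha)/\sinh(br)$.
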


\begin{proof}

Let $f:[1,\infty)\to \mathbb{R}$ be the function defined by 
$$ f(r) = \delta + \int_1^r \Aop^{-1}\left( \frac{\sinh b \,\alpha}{\sinh(bs)} \right)\; ds, $$
where $0 < \alpha \le 1$. Hence $f(1)=\delta$ and, choosing $\alpha$ sufficiently small, $f(2\rho +1) < K/2 + \delta/2$. Let $h_0=f(2\rho +1)$.
Observe that if $r=r(\tilde{x})$ is the distance in $\mathbb{H}^2(-b^2)$ from $\tilde{x}$ to a fixed point, then the the graphic of $f$ is a radially symmetric surface, solution of \eqref{generalOperator}  in the hyperbolic plane with constant negative sectional curvature $-b^2$, that is,
$f$ satisfies 
$$ \Aop'(f'(r))f''(r)+ \Aop(f'(r))b\coth b r = 0.$$
Moreover, from the Comparison Laplacian Theorem $$\Delta d(x) \le \Delta r(\tilde{x})=b\coth b r,$$ where $d(x)=dist(x,y)$ and $\tilde{x} \in \mathbb{H}^2(-b^2)$ is a point such that $d(x)=r(\tilde{x}).$  Then, using these two relations and that $f' > 0$, we conclude that $w_y(x):= f(d(x))$ is a supersolution of \eqref{generalOperator} in $M.$
\\
\null \hspace{1em} Since $f(1)=\delta$ and $f(2\rho+1)=h_0$, $w_y(x)$ satisfies the required boundary conditions. Finally defining $h_1:= f(\rho+1)$, $w_y(x) \le h_1 < h_0$ in $B_{\rho+1}(y)$. 
\end{proof}

\

\noindent Let $\varepsilon$ be a positive real satisfying $h_0-h_1 - (K-\delta)/2 \le \varepsilon < h_0 - h_1$ and $W_0 \subset W$ be a neighborhood of $p$ (a ball centered at $p$) s.t.
$$ m < K +\varepsilon \quad {\rm in } \quad W_0. $$
Let $\tilde{W} \subset W_0$ be a neighborhood of $p$ (a ball centered at $p$) s.t.
$$ \operatorname*{dist}(\partial W_0, \tilde{W}) > 3\rho + 2.$$

\

\begin{center}
\begin{tikzpicture}[domain=0:4]
\draw (0,0) circle  (3);

\foreach \y in {25}
{
\draw[domain=\y-90:90-\y, variable=\t, color=black, fill=black!20] plot ({3*tan(\y)*cos(\t) - 3/cos(\y)},{3*tan(\y)*sin(\t)})
arc (180-\y:180+\y:3cm);
}

\foreach \y in {37}
{
\draw[domain=\y-90:90-\y, variable=\t, color=black] plot ({3*tan(\y)*cos(\t) - 3/cos(\y)},{3*tan(\y)*sin(\t)});
}

\foreach \y in {60}
{
\draw[domain=\y-90:90-\y, variable=\t, color=black] plot ({3*tan(\y)*cos(\t) - 3/cos(\y)},{3*tan(\y)*sin(\t)});
}

\foreach \y in {6}
{
\pgfmathatan{3/(2*\y - 3)};
\draw[domain=315-\pgfmathresult:315+\pgfmathresult/3, variable=\t, color=black] plot ({sqrt(2*\y*\y-6*\y+9)*cos(\t) - \y},{sqrt(2*\y*\y-6*\y+9)*sin(\t) + \y});
}

\foreach \y in {6}
{
\pgfmathatan{3/(2*\y - 3)};
\draw[domain=45-\pgfmathresult/3:45+\pgfmathresult, variable=\t, color=black] plot ({sqrt(2*\y*\y-6*\y+9)*cos(\t) - \y},{sqrt(2*\y*\y-6*\y+9)*sin(\t) - \y});
}

\node at (-3.3, 0) {$p$};

\node at (-0.7, 1.2) {$c_1$};
\node at (-0.7, -1.2) {$c_2$};

\node at (-0.5, 0.1) {$W$};

\node at (-1.2, -0.3) {$W_0$};

\node at (-2.3, 0.0) {$\tilde{W}$};

\end{tikzpicture}
\end{center}
\centerline{\null \hspace{1.0em} Fig. 3}

\
We claim that 
$$m < K + \varepsilon - h_0 + h_1 < K$$ in $\tilde{W}$.

\

Indeed: Let $x \in \tilde{W}$ and assume first that $x\in V.$ As observed above, there is some $z \in \partial (V_1 \cup V_2)$, say $z \in \partial V_1$, s.t.
$$x \in B_{\rho}(z)$$ 
and  there is $y\in V_1$ s.t. 
$$ B_1(y) \cap \overline{W \cap V} = \{z\}.$$
Therefore
$$ \operatorname*{dist}(x,y) < \rho +1.$$
Using triangular inequality and that $ \operatorname*{dist}(\partial W_0, \tilde{W}) > 3\rho + 2$, we have  
$$ B_{2\rho + 1}(y) \subset B_{3\rho + 2}(x) \subset W_0.$$
Let $w_y$ be the solution associated to the annulus $B_{2\rho + 1}(y) \backslash B_{1}(y)$ given by Lemma \ref{lem}.
Define 
$$ w = w_y+ K + \varepsilon - h_0 $$
Then, using that $B_1(y) \subset V_1$, 
$$ w = \delta + K + \varepsilon - h_0 > K +\delta+ \varepsilon - \frac{K}{2} -\frac{\delta}{2}> \frac{K}{2}+\frac{\delta}{2} > m \quad {\rm on } \quad \partial B_1(y) $$
and, from $B_{2\rho +1}(y) \subset W_0$,
$$ w = h_0 + K + \varepsilon - h_0 = K +\varepsilon > m  \quad {\rm on } \quad \partial B_{2\rho +1}(y).$$
From the comparison principle,
$$ m < w \quad \rm in \quad B_{2\rho + 1}(y) \backslash B_{1}(y)$$
and, therefore
$$ m < w_y + K + \varepsilon - h_0 < h_1 + K + \varepsilon - h_0 \quad {\rm in } \quad B_{\rho + 1}(y) \backslash B_{1}(y).$$
Since $\operatorname*{dist}(x,y) < \rho +1$, then $x \in B_{\rho +1}(y)$. Hence, using that $x \not\in V_1 \cup V_2$, we have $x \in B_{\rho + 1}(y) \backslash B_{1}(y)$. In this case, $m(x) < h_1 + K + \varepsilon - h_0.$ Finally,  if $x\in V_1 \cup V_2$, the definition of $\varepsilon$ implies that $m(x) < K/2+\delta/2 \le K + \varepsilon - h_0 + h_1$
proving the claim.
\

To conclude the proof of the theorem,
 note that $\nu := -\varepsilon + h_0 -h_1 > 0$, since $\varepsilon < h_0 - h_1$. Then $$K + \varepsilon - h_0 + h_1 = K - \nu$$
and, from the above claim,
$$ m < K -\nu < K \quad {\rm in } \quad \tilde{W}.$$
Hence
$$ \limsup_{x \to p} m(x) \le K -\nu < K$$
leading a contradiction.

\subsection{Proof of Theorem \ref{sec}.}
\begin{proof}
The proof that $m$ is bounded follows the same idea as in Theorem \ref{fi} replacing the geodesics $\Gamma_i$ and $\beta_k$ by totally geodesic hyperspheres $H_i$ and $\Lambda_k$ respectively and considering the same $S$. To build a supersolution $w_k$ such that $w_k=+\infty$ on $\Lambda_k$, we use the same construction as in Lemma \ref{existenceOfSomeScherk}, that is, we consider
$$ g(d)= S + \int_d^{\infty} \Aop^{-1} \left( \frac{K_0 }{(\cosh (a t ))^{n-1}}\right) \; dt, $$
that is well defined and finite for all $d >0$. The function $w_k(x):=g(d(x))$, where $d(x)=dist(x,\Lambda_k)$, is a supersolution according to \cite{RT2}. Moreover it satisfies $w_k(x)=+\infty$ for $x\in \Lambda_k$ since $g(0)=+\infty$ as a result of \eqref{IntegralCondition1Ona(t)}. Using this $w_k$, we conclude in the same way as in Theorem \ref{fi} that $m$ is bounded from above by $S$. In the same way, $m$ is bounded from below.

Now we prove that $m$ is continuous at $p \in \{p_1, \dots, p_k\}$. Denote by $\phi$ the continuous extension of  $m|_{\partial _{\infty }M\setminus\left\{ p_{1},...,p_{k}\right\} }$ to $\partial _{\infty }M.$
  Adding a constant to $\phi$ we may assume  wlg that $\phi(p)=0.$

 Hence we have to prove that 
$$ \lim_{x \to p} m(x) = 0.$$
Let 
$$ K = \limsup_{x\to p} m(x). $$
We will show that, for any $\delta >0$, it follows that $K \le \delta$. Since $v \le S$, it follows that  $K \le S$. Suppose that $K > \delta$. Let $V_j$ be a decreasing sequence of neighborhood of $p$ such that $$\bigcap \overline{V}_j = \{p\} \quad {\rm , }  \quad \sup_{x \in V_j} m(x) < K + 1/j \quad {\rm and} \quad \phi \le \frac{\delta}{2}  \quad {\rm on }\quad \partial_{\infty}V_j $$ 
We can suppose that each $V_j$ is a totally geodesic hyperball centered at $p$. (By a totally geodesic hyperball of $\mathbb{H}^n$ we mean a domain in  $ \mathbb{H}^n$ whose boundary is a totally geodesic hypersurface of $ \mathbb{H}^n.$)

\begin{center}
\begin{tikzpicture}[domain=-4:4]
\draw (0,0) circle  (3);
\foreach \y in {25}
{
\draw[domain=\y-90:90-\y, variable=\t, color=black] plot ({3*tan(\y)*cos(\t) - 3/cos(\y)},{3*tan(\y)*sin(\t)});
}

\foreach \y in {50}
{
\draw[domain=\y-90:90-\y, variable=\t, color=black] plot ({3*tan(\y)*cos(\t) - 3/cos(\y)},{3*tan(\y)*sin(\t)});
}

\draw (0,-3)--(0,2.40);
\draw (0,2.9)--(0,3);

\foreach \y in {150}
{
\draw[domain=\y-90:90-\y, variable=\t, color=black] plot ({3*tan(\y)*cos(\t) - 3/cos(\y)},{3*tan(\y)*sin(\t)});
}

\foreach \y in {15}
{
\draw[domain=180+\y:360-\y, variable=\t, color=black] plot ({3*tan(\y)*cos(\t) },{3*tan(\y)*sin(\t) +  3/cos(\y)});
}

\foreach \y in {15}
{
\draw[domain=\y:180-\y, variable=\t, color=black] plot ({3*tan(\y)*cos(\t) },{3*tan(\y)*sin(\t) -  3/cos(\y)});
}

\filldraw[black] (-3,0) circle (2pt);
\node at (-3.3, 0) {$p$};
\node at (-2.5, 0.5) {$\tilde{V}_j$};
\node at (-1.6, 0.8) {$V_j$};
\node at (-0.3, 0.3) {$A$};
\node at (1.5, 1.5) {$T_j(V_j)$};
\node at (0,2.6) {$B_q$};
\node at (-0.2,3.3) {$q$};
\filldraw[black] (0,3) circle (2pt);
\node at (-2.5, -0.7) {$x_j$};
\filldraw[black] (-2.3, -0.45) circle (1.5pt);
\node at (-0.4, -1.7) {$y_j$};
\filldraw[black] (0, -1.7) circle (1.5pt);
\end{tikzpicture}
\end{center}
\centerline{\null \hspace{1.0em} Fig. 4}

\

For each $j$, let $\tilde{V}_j \subset V_j$ be a totally geodesic hyperball centered at $p$ such that  
$$ dist( \partial \tilde{V}_j, \partial V_j ) \ge j  \quad {\rm and} \quad \sup_{x \in \tilde{V}_j} m(x) > K -1/j.$$
Then there exists a sequence $(x_j)$ that satisfies $x_j \in \tilde{V}_j$ and
$$ K - 1/j < m(x_j) < K + 1/j .$$ 
Denote $A = V_1$.  
It is well known that there exists an isometry  $T_j:\mathbb{H}^n \to \mathbb{H}^n$  that preserves $p$,  $T_j(\tilde{V}_j) \supset A$ and $y_j:=T_j(x_j) \in \partial A.$ We can suppose that $T_j(V_j)$ is an increasing sequence and that $\partial_{\infty}A \subset {\rm int} \; \partial_{\infty} T_j(V_j)$ for any $j$.
Observe that $$u_j= m \circ T^{-1}_j$$
is a solution of \eqref{generalOperator} and satisfies 
\begin{equation}
  \sup_{T_j(V_j)}u_j < K + 1/j \quad {\rm and } \quad u_j(y_j) > K - 1/j.
\label{limiteSuperiorWm}
\end{equation}
Moreover  $\tilde{V}_j \subset V_j \subset A \subset T_j(\tilde{V}_j)$ implies that 
\begin{align*}
 dist( \partial T_j (V_j) ,A ) &\ge dist (\partial T_j(V_j), T_j(\tilde{V}_j)) \\[5pt]
                             &= dist(\partial V_j, \tilde{V}_j)\ge j \to \infty .
\end{align*}

Observe that $T_j(V_j)$ is a totally geodesic hyperball and 
$$u_j \le \frac{\delta}{2} \quad {\rm  on} \quad  \partial_{\infty} (T_j(V_j)) \backslash \{p\} ,$$
since $u_j = m \circ T^{-1}_j$  and $m=\phi \le \delta/2$ on $V_j \backslash \{p\}$. Using that $A \subset T_j(V_j)$ and $p \not\in \partial_{\infty}( \mathbb{H}^n \backslash A )$, we have that $\partial_{\infty}A \cap \partial_{\infty}( \mathbb{H}^n \backslash A )\subset \partial_{\infty}T_j(V_j)\backslash \{p\}$ and, therefore, $u_j \le \delta/2$ on $\partial_{\infty}A \cap \partial_{\infty}( \mathbb{H}^n \backslash A)$. For $q \in \partial_{\infty}A \cap \partial_{\infty}( \mathbb{H}^n \backslash A)$, let $B_q$ be a totally geodesic hyperball centered at $q$ disjoint with $V_2$ such that $B_q \subset T_j(V_j)$ for any $j$. (This is possible since $(V_j)$ is a decreasing sequence, $T_j(V_j)$ is an increasing sequence and $\partial_{\infty}A \subset {\rm int} \, \partial_{\infty} T_j(V_j)$). In the same way as we did in the beginning, we can find supersolutions $w_q$ of
$$ \left\{ \begin{array}{rcl}   {\rm div} \left(\displaystyle \frac{\Aop (|\nabla u|)}{|\nabla u |} \nabla u \right) & = & 0 \quad {\rm in } \quad B_q \\[5pt]
                                                                                u   & = & +\infty \quad {\rm on } \quad  \partial B_q \\[5pt]
                                                                                u   & = & \delta/2 \quad {\rm on } \quad {\rm int} \; \partial_{\infty} B_q. \\ \end{array} \right. $$
Since $u_j \le w_q=\delta/2$ on ${\rm int} \; \partial_{\infty} B_q$, the comparison principle implies that $u_j \le w_q$ in $B_q$. Let $\tilde{B}_q \subset B_q$ be the hyperball with boundary equidistant to $\partial B_q$, for which $w_q < \delta$ in $\tilde{B}_q$. Hence $u_j < \delta$ in $\tilde{B}_q$ and, therefore, $u_j < \delta$ in $\tilde{B}$ for any $j$, where
$$ \tilde{B} = \bigcup_{q \in \partial_{\infty}A \cap \partial_{\infty}( \mathbb{H}^n \backslash A)} \tilde{B}_q .$$
Observe that $\tilde{B}$ is a neighborhood of $\partial_{\infty}A \cap \partial_{\infty}( \mathbb{H}^n \backslash A)$ and $\partial A \backslash \tilde{B}$ is compact.
\\ \\ \indent Now we prove that there exist $\nu > 0$ and $j_0 \in \mathbb{N}$ such that $u_j(y) \le K - \nu$ for any $j \ge j_0$ and $y \in \partial A$ contradicting $u_j(y_j) > K -1/j$ and $y_j \in \partial A$. 
\\ \indent Let $y$ be some point of $\tilde{B}$ such that the ball of radius $1$ centered at $y$, $B_1(y)$, is contained in $\tilde{B}$. Due to the fact that $\partial A \backslash \tilde{B}$ is compact, there exist $\rho >0$ such that the ball of radius $\rho +1$, $B_{\rho+1}(y)$, contain $\partial A \backslash \tilde{B}$.  Henceforth, we proceed as in Theorem \ref{fi}, using Lemma \ref{lem}. This lemma also holds in $\mathbb{H}^n$ and to prove it we define $f:[1,\infty)\to \mathbb{R}$ by
$$ f(r) = \delta + \int_1^r \Aop^{-1}\left( \frac{\sinh^{n-1} (\alpha)}{\sinh^{n-1}(s)} \right)\; ds \quad {\rm with } \quad 0< \alpha \le 1,$$
that satisfies
$$\Aop'(f'(r))f''(r)+ \Aop(f'(r))(n-1)\coth  r = 0, $$
and apply the same argument, obtaining a supersolution (indeed a solution) $w_y(x)=f(d(x))$. Then, we can consider $h_0$ and $h_1$ as in Lemma \ref{lem} and define
$w=w_y + K + \varepsilon - h_0$, where $\varepsilon$ satisfies $h_0-h_1 - (K-\delta)/2 \le \varepsilon < h_0 - h_1$. Take $j_0$ such that $1/j_0 < \varepsilon$. From \eqref{limiteSuperiorWm},
$$ \sup_{\partial A} u_j \le \sup_{T_j(V_j)} u_j < K + 1/j  < K + \varepsilon \quad {\rm for } \quad j \ge j_0.$$
Hence, following the same computation as in Theorem \ref{fi}, $w$ is a supersolution that satisfies $ w \ge u_j$ in $B_{2\rho + 1}(y) \backslash \overline{B_1(y)}$ for any $j \ge j_0$. Moreover $w <h_1 + K + \varepsilon - h_0$ in $B_{\rho + 1}(y) \backslash B_{1}(y) \supset \partial A \backslash \tilde{B}$. In $\partial A \cap \tilde{B}$, we also have $u_j < \delta <  h_1 + K + \varepsilon - h_0$. Thus, defining $\nu = h_0 - h_1 - \varepsilon > 0$, it follows that
$$ u_j  < K - \nu \quad {\rm in } \quad \partial A \quad {\rm for } \quad j \ge j_0.$$ 
But this contradicts $u_j(y_j) > K -1/j$ for any $j$. Therefore $K=0$. In a similar way $\liminf_{x \to p} m(x) \ge 0$ completing the proof.
 
\end{proof}

\subsection{Proof of Theorem \ref{nonexistenceThm} }
\begin{proof}
The idea is to build solutions that are constant along horospheres for which the asymptotic boundary is $p_1$. For that, let $H_1$ be some horosphere such that the asymptotic boundary is $p_1$ and $d(x)$ the distance with sign given by
$$ d(x) = \left\{ \begin{array}{rl}  dist(x, \partial H_1) & {\rm if } \; x \in H_1 \\[5pt]
                            - dist(x, \partial H_1) & {\rm if } \; x \not\in H_1 .\end{array} \right. $$
We search solutions of the form $m(x)=g(d(x))$, where $g:\mathbb{R}\to \mathbb{R}$ is a positive increasing function.
From \eqref{generalOperator}, we have that $g$ satisfies
$$ \Aop'(g'(d)) g''(d) + \Aop(g'(d)) \Delta d =0.$$
Since $d(x)$ is the distance (with sign) between $x$ and the horosphere $H_1$, then $\Delta d(x)= -(n-1)$. Therefore
\begin{equation}
\Aop'(g'(d)) g''(d) -(n-1) \Aop(g'(d)) =0.
\label{relationForG(d)}
\end{equation}
To find a solution to this equation, note first that $\Aop^{-1}(t)$ is defined for any $t >0$, since $\Aop$ is unbounded. Hence we can consider the function
$$ g_0(d)=\int_{-\infty}^d \Aop^{-1}(e^{(n-1)s}) \; ds $$
for all $d \in \mathbb{R}$. This integral converges at $-\infty$ since condition \eqref{basicHypothesesOna(s)} implies that $\Aop^{-1}(t) \le (t/\bar{D})^{1/q}$ for $\Aop^{-1}(t) \in [0,\delta_0]$. Observe that $g_0$ is positive, increasing, satisfies equation \eqref{relationForG(d)}, converges to $0$ as $d\to -\infty$ and diverges to $+\infty$ as $d\to +\infty$, because $\Aop^{-1}$ is increasing. Therefore
$$m(x)=g_0(d(x))$$ is a solution of \eqref{generalOperator} that satisfies $m(x_k)\to +\infty$ if $x_k \to p_1$ with $d(x_k) \to +\infty$.
Moreover, using that $d(x)\to -\infty$ as $x \to p \in \partial_{\infty}\mathbb{H}^n \backslash \{p_1\}$, it follows that $m(x) \to 0$ proving the result.
\end{proof}

\

\centerline{
\begin{tabular}{ccc}
{\small Leonardo Bonnorino} & $\hspace{1.5cm}$& {\small Jaime  Ripoll} \\
{\small UFRGS}  &  & {\small UFRGS}\\
{\small Instituto de Matem\'atica} &
& {\small Instituto de Matem\'atica}\\
{\small Av. Bento Gon\c calves 9500} & & {\small Av. Bento Gon\c calves 9500}\\
{\small 91540-000 Porto Alegre-RS } & &{\small 91540-000 Porto Alegre-RS }\\
{\small  BRASIL} &  & {\small BRASIL} \\
{\small leonardo.bonorino@ufrgs.br}& &{\small jaime.ripoll@ufrgs.br} \\
\end{tabular}}

\end{document}